\def\theequation{\thesection.\@arabic \c@equation}
\def\@citecolor{blue}
\def\@urlcolor{blue}
\def\@linkcolor{blue}
\def\theenumi{\@roman\c@enumi}
\theoremstyle{plain}
\newtheorem{theorem}[equation]{Theorem}
\newtheorem{lemma}[equation]{Lemma}
\newtheorem{corollary}[equation]{Corollary}
\newtheorem{proposition}[equation]{Proposition}
\newtheorem*{theorem*}{Theorem}
\newtheorem*{claim*}{Claim}
\theoremstyle{definition}
\newtheorem{definition}[equation]{Definition}
\newtheorem{example}[equation]{Example}
\newtheorem{remark}[equation]{Remark}
\newtheorem{discussion}[equation]{Discussion}
\DeclareMathOperator{\Char}{char}
\DeclareMathOperator{\tor}{Tor}
\def\frk{\mathfrak}               
\def\mm{{\frk m}}
\title{A hyperplane restriction theorem and applications to reductions of ideals}
\author[G.~Caviglia]{Giulio Caviglia}
\address{Department of Mathematics, Purdue University, West Lafayette IN
47907, USA}
\email{gcavigli@purdue.edu}
\thanks{The work of author is supported by a grant from the Simons Foundation (41000748, G.C.)}
\keywords{General hyperplane restriction, Macaulay's Theorem, reduction, joint reduction}
\subjclass[2010]{13P05,13P10,13H15}
\begin{document}

\begin{abstract}
Green's general hyperplane restriction theorem gives a sharp upper bound for the Hilbert function of a standard graded algebra over and infinite field $K$  modulo a general linear form. We strengthen Green's result by showing that the linear forms that do not satisfy such estimate belong to a finite union of proper linear spaces. As an application we give a method to derive variations of the Eakin-Sathaye theorem on reductions. In particular, we recover and extend results by O'Carroll on the Eakin-Sathaye theorem for complete and joint reductions.
\end{abstract} 
 
\maketitle

\section*{Introduction}

Let $R= \bigoplus_{d\in \mathbb{N}} R_d $
be a standard graded algebra over an infinite field $K$. A well-known result of Green \cite{Gr1} provides an upper bound for the dimension of the graded component of degree $d$ of  $R/lR$, where $l$ is a linear form, in terms of the dimension of the graded component of degree $d$ of $R.$ Such a bound is satisfied generically, in the sense that it holds for any linear form in a certain  non-empty Zariski open set $U\subseteq \mathbb{A}(R_1).$ 
This estimate, known as \emph{general hyperplane restriction theorem} is one of the most useful results in the study of Hilbert functions of graded algebras. It plays a central role in modern proofs of many classical theorems on Hilbert functions; Macaulay's characterization of all the possible Hilbert functions of standard graded algebras, Gotzmann's persistence theorem and Gotzmann's regularity theorem \cite{Go} are among those (see for instance \cite{Gr1}, \cite{BH} Section 4.2 and 4.3, and \cite{Gr2} Section 3). 

One of the main results of this paper is a strengthening of the general hyperplane restriction theorem. We show, in Theorem \ref{MAIN},  that the Zariski open set of linear  forms satisfying Green's bound contains the complement of a finite union of proper linear subspaces. 
The technical aspect of this result is discussed in Section \ref{Section2} where a more general statement, Theorem \ref{NewGreenTHM}, is presented.

The central part of the proof of Theorem \ref{NewGreenTHM} follows closely Green's original paper  \cite{Gr2}, with the main difference that we underline the key-properties (see Definition \ref{KEY})  needed in order to build up the inductive steps of the argument.  Even though these properties are rather technical, the most common situations in which they are satisfied are quite simple, and allows us to derive Theorem \ref{MAIN}. 

The goal of the first half of this paper, as mentioned above, is to provide a method to substitute the genericity condition for the linear form with a weaker assumption. 
The following example will perhaps give the reader some motivation of why a modification of Green's result is desirable. Assume for instance that the standard graded algebra $R$ in Green's result is a quotient of  the following toric algebra: 
\[S=K[X_iY_j \vert 1\leq i\leq n_1, 1\leq j \leq n_2]\cong K[T_1,\dots,T_{n_1n_2}]/I.\]     
For such an algebra it is reasonable to expect that a linear form of $R$ which is the image of the product of a general linear form in the $X_i$'s and a general linear form in the $Y_j$'s may satisfy Green's bound. We will show, as a simple consequence of Theorem \ref{NewGreenTHM},  that this is in fact the case, even though such an element is not general since linear forms of this kind  belong to a non-trivial Zariski closed set. 

 In the second half of the paper we provide an application of Theorem \ref {MAIN} to the theory of reductions of ideals in local rings. Let $(A,\mm)$ be a local ring  and let $I\subset A$ be an ideal. A \emph{reduction} of $I$ is an ideal $J\subset I$ such that  $I^{n+1}=JI^n$ for some non-negative integer $n.$ The notion of reduction, which  was introduced by Northcott and Rees in \cite{NR}, has been widely used in many areas, including  multiplicity theory and the theory of blow-up rings. We refer the reader to the dedicated chapter in \cite{HuSw}.
 
 Green's general hyperplane restriction theorem  can be employed (see \cite{Ca1}, or \cite{HuSw} section 8.6) to give a short proof of the Eakin-Sathaye theorem (see  \cite{ES}, \cite{Sa} and also \cite{HT}) a well known result on reduction of ideals.  Precisely, when $\vert R/ \mm \vert =\infty$, the main theorem of \cite{ES} says that for an integer $p$ large enough so that the number of generators of $(I^i)$ is smaller than $\binom{i+p}{p}$, there exists a reduction 
$(h_1,\dots,h_p)$ of $I$ such that $I^i=(h_1,\dots,h_p)I^{i-1}.$  

The result of \cite{ES} has been generalized by O'Carroll \cite{O} (see also \cite{BE} and \cite{GoSuVe}) to the case of complete and joint reductions in the sense of Rees. It is worth to note that the elements $h_1,\dots,h_p,$ can be chosen to correspond to general linear forms of the fiber cone ring $R=\bigoplus_{i\geq 0} I^i/ \mm I^i.$  Theorem \ref{NewGreenTHM}, by strengthening Green's general hyperplane theorem, has the direct consequence of allowing for variations of the Eakin-Sathaye theorem. Specifically, the weakening of the hypothesis on the general linear forms allows us to recover and extend O'Carroll results to a broader range of situations (see Section 2).

The content of this paper, with the exemption of the main result Theorem \ref{MAIN}, is taken form Chapter 8 of the author's Ph.D thesis. We are thankful to David Eisenbud for the comments he provided on an earlier version of this manuscript and to Alessandro De Stefani for important suggestions regarding Section 1, in particular on how to relax assumptions on the field $K.$

\section{A Hyperplane Restriction Theorem}\label{Section2}

Let $R$ be a standard graded algebra over an infinite field $K$. We can
write $R$ as $A/I$, where $A=K[X_1,\dots,X_n]$ and $I$ is a homogeneous ideal. 
In the following, when we say that a property $(P)$ is satisfied by $r$ general linear forms of $R$ we mean that there exists a non-empty Zariski open set $U\subseteq \mathbb{A}(R_1)^r$ such that any $r$-tuple in $U$ consists of $r$ linear forms satisfying $(P).$ 

Recall that given  a positive integer $d$,
any other non-negative integer $c$ can then be uniquely expressed in term of $d$ as 
$c=\binom{k_d}{d}+\binom{k_{d-1}}{d-1}+\dots + \binom{k_1}{1},$
where the $k_i$'s are non-negative and strictly decreasing, i.e. 
$k_d>k_{d-1}>\dots>k_1 \geq 0$. This way of writing $c$ is called the
$d$'th \emph{Macaulay representation} of $c,$ and the $k_i$'s are
called the $d$'th \emph{Macaulay coefficients} of $c.$
The integer $c_{\langle d \rangle }$ is defined to be
$c_{\langle d \rangle }=\binom{k_d-1}{d}+\binom{k_{d-1}-1}{d-1}+\dots + \binom{k_1-1}{1}.$ 

Mark Green proved the following:
\begin{theorem*}[General Hyperplane Restriction Theorem]\label{Green1}
Let $R$ be a standard graded algebra over an infinite field $K$, let $d$ be a degree and let $l$ be a general linear form of $R.$ Then 
\begin{equation}\label{grr}
\dim_K (R/lR)_d\leq (\dim_K R_d)_{\langle d \rangle }. 
\end{equation}
\end{theorem*}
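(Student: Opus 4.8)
The plan is to reduce the statement to a purely combinatorial inequality about a Borel-fixed monomial ideal, by passing to the generic initial ideal with respect to the reverse lexicographic order, and then to establish that inequality by induction on the number of variables together with an analysis of Macaulay representations.

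First I would record the meaning of the quantity to be bounded. Writing $R = A/I$ with $A = K[X_1,\dots,X_n]$, one has $R/lR = A/(I + lA)$, so that $\dim_K(R/lR)_d$ is the dimension of the cokernel of the multiplication map $R_{d-1} \xrightarrow{\,\cdot\, l\,} R_d$. Since $l$ is general, after a generic linear change of coordinates I may assume $l = X_n$ and that $I$ is in generic coordinates, so that its initial ideal is $J := \operatorname{gin}(I)$ taken in the reverse lexicographic order. The two facts I would use about this order are that passing to $J$ preserves the Hilbert function, giving $\dim_K R_d = \dim_K (A/J)_d$, and that restriction by the general form $l$ corresponds at the level of initial ideals to quotienting by the last variable, giving
\[
\dim_K (R/lR)_d = \dim_K \bigl(A/(J + X_n A)\bigr)_d.
\]
This second identity is the special feature of the reverse lexicographic order — the variable $X_n$ behaves like a general linear form for $J$ — and I would isolate it as one of the key inductive properties of Definition \ref{KEY}.

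With this reduction the problem becomes combinatorial. Letting $W_d$ denote the set of monomials of degree $d$ not lying in $J$, we have $\dim_K R_d = \#W_d$, while $\dim_K(R/lR)_d$ counts exactly those monomials of $W_d$ that are not divisible by $X_n$, i.e. that lie in $K[X_1,\dots,X_{n-1}]$. The task is therefore to prove
\[
\#\{\, u \in W_d : X_n \nmid u \,\} \le (\#W_d)_{\langle d\rangle}.
\]
Here I would exploit that $J$ is Borel-fixed, which rigidly constrains which monomials can be absent from $J$ and is what makes the right-hand Macaulay operation an upper bound.

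The main work, and the step I expect to be the real obstacle, is proving this combinatorial inequality. I would argue by induction on $n$, stratifying the monomials of $W_d$ by their $X_n$-exponent and comparing the resulting partial counts with the recursive structure of the $d$-th Macaulay representation $\#W_d = \binom{k_d}{d} + \cdots + \binom{k_1}{1}$ and of the operation $c \mapsto c_{\langle d\rangle}$. The delicate point is to show that stripping off the variable $X_n$, which drops the relevant counts from degree $d$ in $n$ variables to degree $d-1$ in one fewer variable, interacts with the Macaulay coefficients in exactly the direction that yields $c_{\langle d\rangle}$ as the bound; the strong stability of $J$ is precisely the hypothesis that forces the binomial bookkeeping to close, and formulating it as the abstract key property of Definition \ref{KEY} is what allows the induction to be carried through uniformly.
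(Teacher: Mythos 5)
Your reduction to the reverse lexicographic generic initial ideal is exactly Green's proof from \cite{Gr2}, and it carries a genuine gap: it establishes the theorem only in characteristic zero, while the statement (like Green's original proof in \cite{Gr1}) assumes nothing about $\Char(K)$ beyond $K$ being infinite. The failure point is the one you yourself identify as the engine of the combinatorics, namely ``the strong stability of $J$.'' In characteristic $p>0$ the generic initial ideal $J=\operatorname{gin}(I)$ is Borel-fixed only in the weaker characteristic-$p$ sense (Pardue's criterion, involving binomial coefficients modulo $p$) and need not be strongly stable: for instance $(X_1^p,X_2^p)$ is Borel-fixed in characteristic $p$, yet $X_2^p$ lies in the ideal while $X_1X_2^{p-1}$ does not. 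Consequently the monomial bookkeeping that makes ``stripping off $X_n$'' interact correctly with the Macaulay coefficients does not close. This is not a repairable detail of your write-up: the paper states explicitly that no argument based solely on generic initial ideals is known that derives \eqref{grr} in arbitrary characteristic, and it is for this same reason that Conca's extension \cite{Co}, which relies on strongly stable gins, is restricted to characteristic zero.

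Note also that your identification of this mechanism with Definition \ref{KEY} is a mischaracterization: the properties \textbf{(Gr,$d$)} say nothing about term orders, initial ideals, or Borel-fixedness. The paper's route to the statement --- Theorem \ref{NewGreenTHM} applied to general linear forms, which satisfy \textbf{(Gr,$d$)} by case (A) of Example \ref{examples of (Gr,$d$)} --- avoids the gin reduction entirely: it works directly with the ideals $I_{\mathbf o}$ built from colons and sums of the linear forms, runs a decreasing double induction using the exact sequences $0\to R/(I\colon l)(-1)\to R/I\to R/(I+(l))\to 0$, and closes the induction with the same Macaulay-representation manipulations you describe, but with hypothesis \eqref{33} of \textbf{(Gr,$d$)} playing the role you assign to strong stability. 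That substitution is precisely what makes the argument characteristic-free.
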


The above result was first proved in \cite{Gr1} with no assumption on the characteristic of the base
field $K.$  For the case $\Char(K)=0$, a more combinatorial and perhaps simpler proof be found in \cite{Gr2}; it uses generic initial ideals and  it gives at the same time a proof of Macaulay's estimate on Hilbert functions.  Despite the extensive literature, I am not aware of any argument based solely on generic initial ideals that would derive \eqref{grr} in any characteristic.  

It is important to recall that the numerical bound \eqref{grr} can be also interpreted in the following way: let $A=K[X_1,\dots,X_n]$ and let $I\subset A$  be a homogeneous ideal. Define  $I^{\text{lex}}\subset A$ to be the unique lex-segment ideal with the same Hilbert function as $I.$ Let $c$ be the dimension, as a $K$-vector space, of $(A/I)_d.$ By definition we also have that  $\dim_K (A/I^{\text{lex}})_d=c.$ It is possible to show that $\dim_K (A/(I^{\text{lex}}+(X_n))_d=c_{\langle d \rangle }.$ For any degree $d$, the general hyperplane restriction theorem  is equivalent to the statement that if $l$ is general then
\begin{equation}\label{grr1}
\dim_K (A/I+(l))_d \leq \dim_K (A/I ^{\text{lex}}  +(X_n))_d.
\end{equation}
The inequality \eqref{grr1} has been generalized by several authors. Aldo Conca \cite{Co} has proved 
that when characteristic of $K$ is zero and $l_1,\dots, l_r$ are general linear forms one has
\begin{equation}
\dim_K \tor_i(A/I,A/(l_n,\dots,l_r))_d \leq  \dim_K \tor_i(A/I^{\text{lex}},A/(X_n,\dots,X_r))_d \quad \text{ for all }i \text{ and } d.
\end {equation}
Conca's result, when $i=0$, corresponds to the characteristic zero case of Green's theorem. 
Unfortunately, the method used in \cite{Co} requires the use of general initial ideals which are also stongly stable and, as mentioned above, this puts some restriction on the possible characteristic of $K$. 
In a different direction Herzog and Popescu \cite{HP} and Gasharov \cite{Ga} extended \eqref{grr1} by showing that the next inequality holds  for a general form  $f$ of $A$ of degree $c$
 \begin{equation}\label{grr2}
\dim_K (A/I+(f))_d \leq \dim_K (A/I ^{\text{lex}}  +(X_n^c))_d. 
\end{equation}
Further generalizations in this direction can be found in \cite{CaMu}.

The goal of the remaining part of this section is to show how the assumption of generality on the linear form satisfying $\eqref{grr}$, or equivalently \eqref{grr1}, can be relaxed. We first introduce some notation.

Let $R$ be a standard graded algebra, we denote by $\frk m$ its homogeneous maximal ideal and let ${\bf l}= l_1,\dots,l_r$ be a sequence of linear forms.
For every $0\leq m\leq r$ and every sequence $\mathbf o= o_1,\dots,o_m$  of $m=\vert \mathbf o \vert $ letters in the set $\{c,s\},$ we construct an ideal $I_{{\mathbf l}, \mathbf o}$  by recursively considering colons and sums of the above linear forms.

When  $\mathbf o$ is empty we let $ I_{\mathbf l, \mathbf o}=(0)$ and when  $\mathbf o$ equals $c$ or $s$ we set $I_{\mathbf l, \mathbf o}$ to be $(0):l_1$ and $(l_1)$ respectively. In general if $\mathbf {\bar o}=o_1,\dots, o_i$ and $\mathbf o= {\mathbf {\bar o}},o_{i+1}$ we set $I_{\mathbf l , \mathbf o}$ to be $I_{\mathbf l , \mathbf {\bar o}}:l_{i+1}$ 
or $I_{\mathbf l , \mathbf {\bar o}}+(l_{i+1})$ depending whether $o_{i+1}$ is $c$ or $s.$ We also let $\vert {\mathbf o} \vert_c$
 and $\vert \mathbf o \vert _s$ be, respectively, the number of letters $c$ and of letters $s$ in $\mathbf o$. For simplicity, whenever it is clear from the context what the sequence $\mathbf l$ is, we will just write $I_{\mathbf o}$ instead of $I_{\mathbf l , \mathbf o}.$

\begin{definition}\label{KEY}
We say that $l_1,\dots,l_r$ satisfy property \textbf{(Gr,$d$)}, meaning they are suitable for a hyperplane restriction theorem in  degree $d>0$, if for every sequence $\mathbf o = o_1,\dots,o_i$ with $0\leq i\leq r$, the following hold:

\begin{enumerate}
\item \label{11}
If $\frk m \not \subseteq I_{\mathbf o}$ and $i<r$, then $l_{i+1}\not \in  I_{\mathbf o}$,
\item \label{22}
If $i=r$ and $\vert \mathbf o \vert_c<d$ then $\frk m \subseteq I_{\mathbf o}$,  
\item \label{33}
If $i\leq r-2$ then $\dim_K (I_{\mathbf {o},c,s})_{d- \vert \mathbf o \vert _c -1} \leq \dim_K (I_{\mathbf {o},s,c})_{d- \vert \mathbf o \vert _c -1}$.
  \end{enumerate}
\end{definition}

 \begin{remark}\label{Remark on (3)Green}
Let $n=\dim_K R_1.$ 
If property \eqref{11}  holds, then property \eqref{22} is automatically satisfied whenever $r\geq n+d-1$ and clearly in this case $l_1,\dots, l_r$ generate $\frk m.$
Property \eqref{33} is implied by the next stronger condition. 

\begin{enumerate} \setcounter{enumi}{3}
\item \label{44} If $i\leq r-2$ then  $\dim_K(I_{\mathbf o,c,s})_{d- \vert \mathbf o \vert _c -1} =\dim_K((I_{\mathbf o}:l_{i+2})+l_{i+1})_{ d- \vert \mathbf o \vert _c -1}.$ 
\end{enumerate}
To see that this is the case, notice that $(I_{\mathbf o}:l_{i+2})+(l_{i+1})\subseteq I_{\mathbf o,s,c}$  and thus  \eqref{44} gives:
$\dim_K (I_{\mathbf {o},c,s})_{d- \vert \mathbf o \vert _c -1} = \dim_K((I_{\mathbf o}:l_{i+2})+l_{i+1})_{ d- \vert \mathbf o \vert_c -1} \leq  \dim_K (I_{\mathbf {o},s,c})_{d- \vert \mathbf o \vert_c -1}.$
\end{remark}

At a first sight the properties \textbf{(Gr,$d$)} may not seem easy to verify. However, there are  several examples for which it is not hard to find linear forms satisfying them. For instance let $r=n+d-1$, and assume that the linear forms $l_1,\dots,l_r$ span $\frk m$ and that for each $\mathbf o$ the Hilbert functions of the ideals $I_{\mathbf o}$ at the degrees between $0$ and $d-\vert \mathbf o \vert_c-1$ do not depend on the order of the $l_1,\dots,l_r.$ This implies immediately \eqref{11} and \eqref{44}, hence $l_1,\dots, l_r$ satisfy $\textbf{(Gr,$d$)}.$ 
We summarize the above considerations in the following lemma.

\begin{lemma} \label{easy} Let $n=\dim_K R_1,$ $d$ be  a degree, $r\geq n+d-1$,  and $l_1,\dots , l_r$ linear forms of $R$ generating $\frk m.$ If for every sequence $\mathbf o=c_1,\dots, c_i$ with $0\leq i\leq r$, and for every $j\in \{0,\dots, d-\vert \mathbf o \vert_c -1 \}$ 
we have that  $dim_K(I_{\mathbf o})_j$ is independent of the order of the $l_s$'s then  $l_1,\dots, l_r$ satisfy \textbf{(Gr,$d$)}. 
\end{lemma}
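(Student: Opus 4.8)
The plan is to verify the three conditions of Definition~\ref{KEY} by first reducing, through Remark~\ref{Remark on (3)Green}, to conditions \eqref{11} and \eqref{44}: that remark already shows \eqref{44}$\Rightarrow$\eqref{33} and \eqref{11}$+(r\ge n+d-1)\Rightarrow$\eqref{22}. So I would only establish that the order-independence hypothesis forces \eqref{11} and \eqref{44}. The combinatorial engine I would extract first is a \emph{uniform independence} of the forms: applying the hypothesis to the pure-sum sequences $\mathbf o=s^m$ in degree $1$ (which lies in the admissible range as soon as $d\ge 2$) shows that $\dim_K(l_{j_1},\dots,l_{j_m})_1$ depends only on $m$; since the $l_s$ span $R_1$, a basis can be extracted from them, and order-independence then forces every $n$ of the $l_1,\dots,l_r$ to be a basis of $R_1$.

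For \eqref{44} I would note that the two ideals being compared are the same construction read off two orderings: $I_{\mathbf o,c,s}=(I_{\mathbf o}:l_{i+1})+(l_{i+2})$, whereas $(I_{\mathbf o}:l_{i+2})+(l_{i+1})$ is obtained from it by transposing $l_{i+1}$ and $l_{i+2}$, a transposition that leaves $I_{\mathbf o}$ (built only from $l_1,\dots,l_i$) unchanged. Thus \eqref{44} is exactly order-independence of $\dim_K(I_{\mathbf o,c,s})$ in degree $d-\vert\mathbf o\vert_c-1$. Concretely one can verify this through the identity $\dim_K(J+(l'))_e=\dim_K J_e+\dim_K R_{e-1}-\dim_K(J:l')_{e-1}$ (from the exact sequence of multiplication by $l'$): taking $J=I_{\mathbf o}:l$ and using that $(I_{\mathbf o}:l):l'=I_{\mathbf o}:(ll')$ is symmetric in $l,l'$, condition \eqref{44} collapses to the swap-invariance of $\dim_K(I_{\mathbf o}:l_{i+1})_{d-\vert\mathbf o\vert_c-1}$.

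For \eqref{11} I would argue by contradiction. Assume $\frk m\not\subseteq I_{\mathbf o}$, so $(I_{\mathbf o})_1\subsetneq R_1$, yet $l_{i+1}\in I_{\mathbf o}$; then $I_{\mathbf o,s}=I_{\mathbf o}$. Swapping the form placed in position $i+1$ with an arbitrary later $l_k$ (a transposition fixing $l_1,\dots,l_i$, hence fixing $I_{\mathbf o}$) and invoking order-independence in degree $1$ gives $\dim_K(I_{\mathbf o}+(l_k))_1=\dim_K(I_{\mathbf o})_1$, i.e.\ $l_k\in(I_{\mathbf o})_1$ for every $k>i$. Counting these together with the sum-forms among $l_1,\dots,l_i$ puts at least $r-\vert\mathbf o\vert_c\ge n$ of the $l_s$ into $(I_{\mathbf o})_1$; by uniform independence some $n$ of them form a basis, forcing $(I_{\mathbf o})_1=R_1$ and so $\frk m\subseteq I_{\mathbf o}$, a contradiction.

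The step I expect to be the real obstacle is the degree bookkeeping at the top of the admissible range. In both \eqref{44} and \eqref{11} the equality one wants lives in degree $d-\vert\mathbf o\vert_c-1$ for a sequence ($\mathbf o,c,s$, resp.\ $\mathbf o,s$) whose own admissible range stops one degree earlier; equivalently, the argument for \eqref{11} above is licensed only for $\vert\mathbf o\vert_c\le d-2$, while the boundary range $\vert\mathbf o\vert_c\ge d-1$ (where the window for $\mathbf o$ shrinks to $\{0\}$) is not reached directly. I would try to close this one-degree gap by inducting on $\vert\mathbf o\vert_c$, propagating \eqref{11} from prefixes carrying fewer colons in the same spirit as the forward count that Remark~\ref{Remark on (3)Green} uses for \eqref{22}, where each genuine $s$-step raises $\dim_K(I_{\bullet})_1$ by one. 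A persistent technical nuisance throughout is that order-independence constrains only the dimensions $\dim_K(I_{\mathbf o})_j$ and never the subspaces themselves, so each relocation of a form must be tested against the correct ideal.
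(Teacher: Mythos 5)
You follow the paper's own route: the paper's entire ``proof'' of Lemma \ref{easy} is the paragraph preceding it, which asserts that order-independence yields \eqref{11} and \eqref{44} \emph{immediately}, after which Remark \ref{Remark on (3)Green} gives \eqref{22} and \eqref{33}. Your two substantive additions are correct and are exactly what ``immediately'' suppresses: the extraction, from the pure-sum sequences, of the fact that every $n$ of the $l_s$ form a basis of $R_1$ (needed in \eqref{11} to absorb the forms used in colons, about which the paper says nothing), and the identification of \eqref{44} as a transposition instance of order-independence via the identity $\dim_K(J+(l'))_e=\dim_K J_e+\dim_K R_{e-1}-\dim_K(J:l')_{e-1}$.

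However, the degree bookkeeping you flag at the end is not an obstacle you failed to clear; it is an off-by-one error in the statement itself, so the induction on $\vert\mathbf o\vert_c$ you propose cannot exist. Indeed \eqref{44} is order-independence of $\dim_K(I_{\mathbf o,c,s})$ in degree $d-\vert\mathbf o\vert_c-1$, while the sequence $\mathbf o,c,s$ has $\vert\mathbf o\vert_c+1$ colons, so the hypothesis only constrains it up to degree $d-\vert\mathbf o\vert_c-2$; that missing top degree is genuinely unavailable. Take $R=K[x,y]/(x^2)$, $d=2$, $r=3$, $(l_1,l_2,l_3)=(x,y,x+y)$. These generate $\frk m$ and are pairwise independent, so all pure-sum ideals are order-independent in degrees $0,1$; every one-colon sequence has window $\{0\}$, where the dimension is $0$ for each shape and ordering except the shape $s,s,c$, where it is $1$ for every ordering (any two of the forms generate $\frk m$); sequences with two or more colons have empty window. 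So the hypothesis of Lemma \ref{easy} holds verbatim. Yet $(0:_Rx)=(x)$, so $I_{c,s}=(0:_Rx)+(y)=\frk m$ has degree-one dimension $2$, while $I_{s,c}=(x):_Ry$ has $\bigl((x):_Ry\bigr)_1=Kx$ of dimension $1$: condition \eqref{33} fails for $\mathbf o=\emptyset$. Consistently, $\dim_K(R/xR)_2=1>0=(\dim_K R_2)_{\langle 2\rangle}$, i.e.\ Green's bound itself fails for $l_1$, so no argument can rescue the lemma as stated.

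The statement, and with it your proof, is repaired by widening the hypothesis to $j\in\{0,\dots,d-\vert\mathbf o\vert_c\}$: then the transposition instance needed for \eqref{44} lands inside the window of $\mathbf o,c,s$, and your degree-one argument for \eqref{11} is licensed for all $\vert\mathbf o\vert_c\le d-1$, which is the entire range in which \eqref{11} is used in Remark \ref{Remark on (3)Green} and in the proof of Theorem \ref{NewGreenTHM}. (For $\vert\mathbf o\vert_c\ge d$, \eqref{11} is unrecoverable under any window: in $R=K[x,y]/(xy(x+y))$ the forms $x,y,x+y$ have order-independent ideals $I_{\mathbf o}$ in all degrees, by the symmetry permuting the three forms up to scalars, yet $0:_Rxy=(x+y)\ni l_3$ while $\frk m\not\subseteq(x+y)$; so \textbf{(Gr,$d$)} itself must be read as demanding \eqref{11} only for $\vert\mathbf o\vert_c\le d-1$.) This correction is harmless downstream, since Proposition \ref{constant}, Corollary \ref{order} and Example \ref{examples of (Gr,$d$)} actually deliver order-independence in all degrees.
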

 
We complete our discussion of the properties \textbf{(Gr,$d$)} with the following two results, which show a simple case in which the assumptions of Lemma \ref{easy} are satisfied. The experts will be able to see why the conclusion of Proposition \ref{constant} and Corollary \ref{order} are straightforward. We include for the sake of the exposition some concise explanations. 


\begin{proposition}\label{constant} 
Let  $R=K[X_1,\dots,X_n]/J$ be a standard graded algebra over an infinite field $K$. Let $V\subset \mathbb{A}(R_1)$ be an irreducible variety and assume that $V^r = V\times \cdots\times V$ $r$ times is irreducible as well. Then for every $r$-linear forms of $R$ that are general points of $V$, and for every sequence ${\bf o}=o_1,\dots,o_i$ with
 $0\leq i\leq r$, the Hilbert function of $I_{\bf o}$ is well defined, equivalently there exists a non-empty Zariski open subset of $V^r$ on which the Hilbert function of $I_{\bf o}$ is constant.
\end{proposition}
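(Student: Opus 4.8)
The plan is to realize all the ideals $I_{\mathbf o}$ simultaneously as fibers of a single universal construction over the parameter space $W = V^r$, and then to invoke Grothendieck's generic freeness lemma. Since $V^r$ is assumed irreducible, its coordinate ring $\Lambda = K[V^r]$ is a domain, and over $\Lambda$ one has the tautological linear forms $\ell_1,\dots,\ell_r \in (R\otimes_K \Lambda)_1$, where $\ell_j$ is the generic point of the $j$-th copy of $V$; evaluating at a point $v=(l_1,\dots,l_r)\in W$ recovers the tuple $\mathbf l = l_1,\dots,l_r$. Writing $R_\Lambda = R\otimes_K \Lambda$, I would build universal ideals $\mathcal I_{\mathbf o}\subseteq R_\Lambda$ by the very same recursion as in the definition of $I_{\mathbf o}$, replacing each $l_{i+1}$ by $\ell_{i+1}$ and forming colons and sums inside $R_\Lambda$. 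The goal is to prove, by induction on $|\mathbf o|$, the conjunction of two statements on a suitable nonempty open set $D(f_{\mathbf o})\subseteq W$: (a) the graded $\Lambda$-module $R_\Lambda/\mathcal I_{\mathbf o}$ is \emph{graded-free}, so each of its graded pieces is free of finite, well-defined rank; and (b) the formation of $\mathcal I_{\mathbf o}$ \emph{commutes with specialization}, i.e.\ $\mathcal I_{\mathbf o}\otimes_\Lambda \kappa(v) = I_{\mathbf l,\mathbf o}$ for every $v\in D(f_{\mathbf o})$.

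Granting (a)--(b), the proposition follows at once: setting $f=\prod_{\mathbf o} f_{\mathbf o}$ over the finitely many sequences $\mathbf o$, on the nonempty open set $D(f)\subseteq W$ statement (b) identifies the graded pieces of $I_{\mathbf l,\mathbf o}$ with the specializations of those of $\mathcal I_{\mathbf o}$, while (a) and flatness force $\dim_{\kappa(v)}(I_{\mathbf l,\mathbf o})_j = \operatorname{rank}_\Lambda(\mathcal I_{\mathbf o})_j$ to be independent of $v$ in every degree $j$. It is precisely here that irreducibility of $V^r$ is essential: over a reducible parameter space the several components could carry different generic Hilbert functions, so no single open set would work.

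For the induction, the base case $\mathbf o=\varnothing$ is immediate, since $R_\Lambda/\mathcal I_\varnothing = R_\Lambda$ is free over $\Lambda$ and specializes to $R$. In the sum step $\mathcal I_{\mathbf o,s}=\mathcal I_{\mathbf o}+(\ell_{i+1})$ one has $R_\Lambda/\mathcal I_{\mathbf o,s}=\coker\!\big((R_\Lambda/\mathcal I_{\mathbf o})(-1)\xrightarrow{\,\cdot\,\ell_{i+1}} R_\Lambda/\mathcal I_{\mathbf o}\big)$; generic freeness applied to this finitely generated graded module produces the new open set, and cokernels always commute with base change, so (b) is inherited from the inductive hypothesis. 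The colon step $\mathcal I_{\mathbf o,c}=\mathcal I_{\mathbf o}:\ell_{i+1}$ is the delicate one, and I expect it to be the main obstacle, because colons do not in general commute with specialization. Here I would write $\mathcal I_{\mathbf o,c}/\mathcal I_{\mathbf o}=\ker\!\big(R_\Lambda/\mathcal I_{\mathbf o}\xrightarrow{\,\cdot\,\ell_{i+1}}(R_\Lambda/\mathcal I_{\mathbf o})(1)\big)$ and apply generic freeness to the \emph{image} of this multiplication, shrinking to an open set on which $\image$ is free; then $\tor_1^\Lambda(\image,\kappa(v))=0$, the sequence $0\to\ker\to R_\Lambda/\mathcal I_{\mathbf o}\to\image\to 0$ stays exact after $\otimes_\Lambda\kappa(v)$, and the kernel therefore commutes with specialization, which together with the inductive hypothesis (b) for $\mathcal I_{\mathbf o}$ gives (b) for $\mathcal I_{\mathbf o,c}$. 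Freeness (a) for $R_\Lambda/\mathcal I_{\mathbf o,c}$ follows from one further application of generic freeness. A last routine point is that the open set $D(f)$ meets the $K$-rational points of $V^r$, which holds because $K$ is infinite.
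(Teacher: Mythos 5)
Your overall strategy (universal ideals over $\Lambda=K[V^r]$, graded generic freeness, specialization) is sound and genuinely different from the paper's proof, which instead passes to the fraction field $\mathbb K$ of $K[V^r]$, computes a reduced Gr\"obner basis of the generic ideal $I_{\tilde{\mathbf l},\mathbf o}$, and takes the open set where the finitely many rational-function coefficients occurring in that computation are defined and nonvanishing, so that the entire computation---hence the initial ideal and the Hilbert function---specializes verbatim. However, your colon step has a genuine gap: freeness of $\image\phi$, where $\phi\colon N\to N(1)$ is multiplication by $\ell_{i+1}$ on $N=R_\Lambda/\mathcal I_{\mathbf o}$, does \emph{not} imply that $\ker\phi$ commutes with specialization. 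What your exact sequence gives is $\ker\phi\otimes\kappa(v)=\ker\bigl(N\otimes\kappa(v)\to\image\phi\otimes\kappa(v)\bigr)$; but $\phi\otimes\kappa(v)$ factors as $N\otimes\kappa(v)\to\image\phi\otimes\kappa(v)\to N(1)\otimes\kappa(v)$, and $\ker(\phi\otimes\kappa(v))$ is strictly larger than $\ker\phi\otimes\kappa(v)$ whenever the second arrow fails to be injective; that injectivity is governed by $\tor_1^\Lambda(\coker\phi,\kappa(v))$, not by $\tor_1^\Lambda(\image\phi,\kappa(v))$. Concretely, take $R=K[x,y]$, $V=\mathbb{A}(R_1)$, $r=1$, $\mathbf o=c$. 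Then $\Lambda=K[a,b]$ and $\ell_1=ax+by$; since $R_\Lambda$ is a domain, $\mathcal I_{\mathbf o}=0:_{R_\Lambda}\ell_1=(0)$ and $\image\phi=\ell_1R_\Lambda\cong R_\Lambda(-1)$ is graded free over $\Lambda$, so your recipe requires no shrinking at all and your open set is all of $V^r$. Yet at $v=(0,0)$ one has $l_1=0$, so $I_{\mathbf l,\mathbf o}=0:_R 0=R$, while $\mathcal I_{\mathbf o}\otimes\kappa(v)=0$: your statement (b) fails at a point of your open set. (The sequence does stay exact at $v$; the failure is that $\image\phi\otimes\kappa(v)\to N(1)\otimes\kappa(v)$ is the zero map there.)

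The repair is cheap and stays inside your framework: when treating $\mathcal I_{\mathbf o,c}$, also invert an element making the cokernel $\coker\phi=R_\Lambda/(\mathcal I_{\mathbf o}+(\ell_{i+1}))=R_\Lambda/\mathcal I_{\mathbf o,s}$ graded free over $\Lambda$---one further application of generic freeness, or equivalently intersect with the open set furnished by your own statement (a) for the sibling sum ideal. Then $\tor_1^\Lambda(\coker\phi,\kappa(v))=0$ makes $\image\phi\otimes\kappa(v)\to N(1)\otimes\kappa(v)$ injective, and combined with the exactness you already have this yields $\ker\phi\otimes\kappa(v)=\ker(\phi\otimes\kappa(v))$, which is exactly (b) for $\mathcal I_{\mathbf o,c}$; in the example above this extra localization is precisely what removes the origin. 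Two smaller points: you do need the \emph{graded} form of generic freeness, so that a single $f$ works in all degrees simultaneously, as you indicate; and your closing claim that $D(f)$ meets the $K$-rational points of $V^r$ ``because $K$ is infinite'' is unjustified for a general $V$ over a non-closed field, but it is also unnecessary, since the proposition only asserts constancy of the Hilbert function for those rational tuples (if any) lying in a nonempty open set.
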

\begin{proof} 
We consider the coordinate ring of $V,$ say $S_{V}=K[Y_1,\dots,Y_n]/I_V$ and more generally the coordinates ring $S_{V^r}=K[Y_{1,1}\dots,Y_{1,n}, \dots, Y_{r,1}\dots,Y_{r,n}]/I_{V^{r}}$ of $V^r.$ By assumption $V$ and $V^r$ are irreducible, hence their defining ideals $I_V$ and $I_{V^r}$  are prime. Let $\mathbb K$ be the fraction field of $S_{V^r}.$ For every $a,b$ we denote with $y_{a,b}$ the image in $\mathbb K$ of $Y_{a,b}.$ Let $\mathbf{{\tilde l}}$ be image in $R\otimes_K \mathbb K$ of the sequence of linear forms  $\tilde{l_1},\dots,\tilde{l_r}$ of  $\mathbb  K[X_1,\dots,X_n],$    where, for $a=1,\dots,r$, we have set $l_a=y_{a,1}X_1+\cdots+y_{a,n}X_n.$ There is a standard way, using Gr\"obner bases, to explicitly compute a reduced Gr\"obner basis, for instance with respect to the reverse lexicographic order, of the pre-image in  $\mathbb  K[X_1,\dots,X_n]$ of the ideal $I_{\tilde {\mathbf l}, \mathbf o} \subseteq R \otimes_{K}\mathbb K.$ We consider all the non-zero coefficients $\alpha_1,\dots,\alpha_q$ of all the monomials appearing in all the polynomials involved in such computation. We let $U_j$ be the non-empty Zariski open set of $V^r$ where the rational function $\alpha_j$ does not vanish. The desired Zariski open subset is $U=\cap_j^q U_j$ which is not empty since $V^r$ is irreducible. By construction, for any point in $U$ the initial ideal of $I_{\bf o}$ is always the same and therefore the Hilbert function  
is constant as well.
\end{proof}

As a consequence of the above proof we get the following fact.

\begin{corollary}\label{order} With the same assumption as Proposition \ref{constant},
for every $r$-linear forms of $R$ that are general points of $V$, and for every sequence ${\bf o}=o_1,\dots,o_i$ with $0\leq i \leq r$ the Hilbert function of $I_{\bf o}$ is independent of the order of the linear forms.
\end{corollary}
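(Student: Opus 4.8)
The plan is to exploit the symmetry of the configuration space $V^r$ together with the function-field description of the generic Hilbert function established in the proof of Proposition \ref{constant}. Recall from that proof that, writing $\mathbb K$ for the fraction field of $S_{V^r}$ and $\tilde l_a = y_{a,1}X_1+\cdots+y_{a,n}X_n$ for the generic linear forms, the common generic value of $\dim_K (I_{\mathbf o})_j$ for an $r$-tuple in general position on $V^r$ is exactly $\dim_{\mathbb K}(I_{\tilde{\mathbf l}, \mathbf o})_j$. So it suffices to prove that this last number is unchanged when the sequence $\tilde l_1,\dots,\tilde l_r$ is replaced by any permutation $\tilde l_{\tau(1)},\dots,\tilde l_{\tau(r)}$ (with $\tau\in S_r$) while the operation word $\mathbf o$ is kept fixed.

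First I would record the relevant symmetry. Since $V^r = V\times\cdots\times V$ is a product of identical factors, the block-permutation action of $S_r$ on the polynomial ring $K[Y_{1,1},\dots,Y_{r,n}]$ sending $Y_{a,b}\mapsto Y_{\tau(a),b}$ stabilizes the prime ideal $I_{V^r}$; hence each $\tau$ induces a $K$-algebra automorphism of $S_{V^r}$ and, by passing to fractions, a field automorphism $\sigma=\sigma_\tau$ of $\mathbb K$ fixing $K$ pointwise. Extending $\sigma$ to act as the identity on the $R$-factor, I obtain a $\sigma$-semilinear graded ring automorphism $\Phi$ of $R\otimes_K\mathbb K$ which, by construction, carries $\tilde l_a = \sum_b y_{a,b}X_b$ to $\sum_b y_{\tau(a),b}X_b = \tilde l_{\tau(a)}$.

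Next I would transport the ideal construction through $\Phi$. Because $\Phi$ is a ring automorphism it commutes with the two operations defining $I_{\mathbf o}$, namely $\Phi(J:l)=\Phi(J):\Phi(l)$ and $\Phi(J+(l))=\Phi(J)+(\Phi(l))$; inducting along the word $\mathbf o$ then gives $\Phi(I_{\tilde{\mathbf l}, \mathbf o}) = I_{\tau\cdot\tilde{\mathbf l}, \mathbf o}$, the ideal built from the permuted linear forms. A $\sigma$-semilinear bijection sends a $\mathbb K$-basis to a $\mathbb K$-basis, so it preserves the dimension of every (finite-dimensional) graded component; therefore $\dim_{\mathbb K}(I_{\tilde{\mathbf l}, \mathbf o})_j = \dim_{\mathbb K}(I_{\tau\cdot\tilde{\mathbf l}, \mathbf o})_j$ for all $j$. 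Combining this with the identification from the first paragraph, and noting that a permutation of a general point of $V^r$ is again general, yields that the generic Hilbert function of $I_{\mathbf o}$ does not depend on the ordering of the linear forms.

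The only genuinely delicate point is the bookkeeping around semilinearity: $\Phi$ is not $\mathbb K$-linear but only $\sigma$-semilinear, so one must check explicitly that it still preserves $\mathbb K$-dimension. This is elementary once phrased correctly (the image of a $\mathbb K$-basis is again a $\mathbb K$-basis), but it is the step where the argument could go astray if one naively treated $\Phi$ as a $\mathbb K$-linear map. Everything else is a formal consequence of the product structure of $V^r$ and the already-established constancy of the generic Hilbert function.
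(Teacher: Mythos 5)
Your proof is correct, and its skeleton --- pass to the generic forms $\tilde l_a$ over the fraction field $\mathbb K$ of $S_{V^r}$, identify the Hilbert function of $I_{\mathbf l,\mathbf o}$ at a general point $\mathbf l$ with $\dim_{\mathbb K}(I_{\tilde{\mathbf l},\mathbf o})_j$, and conclude by intersecting finitely many non-empty open subsets of the irreducible variety $V^r$ --- is the same as the paper's. The genuine difference lies in how the key point is handled, namely why the generic value is unchanged when the forms are permuted. The paper re-runs the Gr\"obner computation once for each permuted sequence $\sigma(\tilde{\mathbf l})$, obtaining open sets $U_\sigma$ on which the Hilbert function of $I_{\sigma(\mathbf l),\mathbf o}$ is constant, and intersects them; it leaves implicit the reason why the constant values attached to different $\sigma$ coincide. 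Your $\sigma$-semilinear automorphism $\Phi$ of $R\otimes_K\mathbb K$ --- legitimate precisely because the block permutation $Y_{a,b}\mapsto Y_{\tau(a),b}$ stabilizes $I_{V^r}$, the factors of $V^r$ being identical --- supplies exactly that missing comparison, $\dim_{\mathbb K}(I_{\tilde{\mathbf l},\mathbf o})_j=\dim_{\mathbb K}(I_{\tau\cdot\tilde{\mathbf l},\mathbf o})_j$, so in this respect your write-up is more complete than the paper's. Note, however, that your closing observation renders the automorphism machinery logically superfluous: since each coordinate permutation $P_\tau(\mathbf l)=\tau\cdot\mathbf l$ is an automorphism of $V^r$, the set $W=\bigcap_{\tau\in\mathcal S_r}P_\tau^{-1}(U)$ (with $U$ the open set of Proposition \ref{constant}) is non-empty and open by irreducibility, and for $\mathbf l\in W$ one may apply the constancy statement of Proposition \ref{constant} directly at the point $\tau\cdot\mathbf l\in U$, so that $I_{\mathbf l,\mathbf o}$ and $I_{\tau\cdot\mathbf l,\mathbf o}$ both attain the single generic Hilbert function without ever comparing $I_{\tilde{\mathbf l},\mathbf o}$ with $I_{\tau\cdot\tilde{\mathbf l},\mathbf o}$. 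Either of your two mechanisms alone finishes the proof; keeping both is harmless but redundant.
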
 
 \begin{proof} In the proof of Proposition \ref{constant} we constructed a non-empty Zariski open set $U$ by means of  a Gr\"obner basis computation of the pre-image in $\mathbb  K[X_1,\dots,X_n]$ of the ideal $I_{\tilde {\mathbf l}, \mathbf o} \subseteq R \otimes_{K}\mathbb K.$ By reordering the sequence of linear forms $\tilde {\mathbf l}$, for every given permutation $\sigma \in \mathcal S_n$ we get a non-empty Zariski open set $U_\sigma$ where the Hilbert function of $ I_{\sigma (\mathbf l), \mathbf o}$ in independent of the choice of $\mathbf l \in U_{\sigma}$.  By assumption $V^r$ is irreducible, hence $\bar U=\cap_{\sigma \in \mathcal S} U_{\sigma} $ is a non-empty Zariski open set. For every point  $\mathbf l \in \bar U$  the Hilbert function of $I_{\bf o}$ is independent of the order of the linear forms in $\mathbf l.$
 \end{proof}

\begin{remark}\label{irreducible} Regarding the hypotheses of the above results, notice that when $V\subset \mathbb{A}(R_1)$ is an irreducible variety, it is possible to conclude that $V^r$ is irreducible as well provided that $K$ is algebraically closed or alternatively, with no assumption on $K$, provided that the defining ideal of $V$ is homogeneous.
\end{remark}

In the following examples one can apply Lemma \ref{easy} to derive the property $\textbf{(Gr,$d$)}$.
\begin{example}
Let $R=K[X,Y,Z]/I$ be a standard graded algebra over an algebraically closed field $K$. Then for a general choice of $\lambda_1,\dots,\lambda_{d+2} \in  K$ the linear forms $l_i=X+\lambda_iY+\lambda_i^2Z$ satisfy \textbf{(Gr,$d$)}.
\end{example}

\begin{example}\label{examples of (Gr,$d$)}
Let $R$ be a standard graded algebra, $\dim_K R_1=n$ and assume $\vert K \vert =\infty.$ The $r$ linear forms, with $r\geq d+n-1$, in each of the cases below satisfy \textbf{(Gr,$d$)}. These examples are relevant to the next section and will correspond to variations of the  Eakin-Sathaye theorem. All the homomorphisms mentioned below are assumed to send the monomials generating each sub-algebra to forms of $R_1.$
\begin{itemize}
\item[(A)] With no further assumptions on $R$ the $r$ linear forms can be chosen to be general.
\item[(B)] Assume $R$ to be the homomorphic image of the Segre ring: 
\[S=K[X_{1,i_1}\cdot X_{2.i_2}\cdots X_{s,i_s} \vert 1\leq i_1\leq n_1, \dots, 1\leq i_s\leq n_s ].\] Then the $r$ linear forms can be chosen to be the images of $l_1\cdots l_s$, where $l_i$ is a general linear form of $K[X_{i,1},\dots,X_{i,n_i}].$
\item[(C)] Assume that $\Char(K)=0$ and that $R$ is the homomorphic image of the Veronese ring:
$S=K[X_1^{a_1}\cdots X_s^{a_s} \vert \sum _{i=1} ^s a_i=b \text { and } a_i\geq 0].$ Then the $r$ linear forms can be chosen to be the images of $l^b$, where $l$ is a general linear form of $K[X_1,\dots,X_s].$
\item [(D)] Assume that $\Char(K)=0$ and that $R$ is the homomorphic image of Segre products of Veronese rings:
\[
S=K\left [ \prod_{\substack{1\leq i\leq s \\ 1\leq j \leq n_i}} X_{i,j}^{a_{i,j}} \text{ such that } \sum _{j} a_{i,j}=b_i \text { and } a_{i,j}\geq 0 \right ].
\] 
Then the $r$ linear forms can be chosen to be the images of $l_1^{b_1}\cdots l_s^{b_s}$, where $l_i$ is a general linear form of $K[X_{i,1},\dots,X_{i,n_i}].$

\item [(E)] Assume that $\Char(K)=0$ and that $R$ is the homomorphic image of the following toric ring: 
\[
S=K[X_{i_1} \cdots X_{i_s} \vert 1\leq i_1\leq n_1,\dots, 1\leq i_s\leq n_s \text{ and }n_1\leq n_2 \leq \dots \leq n_{s}].
\]   Then the $r$ linear forms can be chosen to be the images of $l_1(l_1+l_2)\cdots (l_1+l_2+\dots+l_s)$, where $l_i$ is a general linear form of $K[X_{n_{i-1},}\dots,X_{n_i}].$
\end{itemize}
\begin{proof} In all of the above cases it is straightforward to verify that the $r$ linear forms generate the homogeneous maximal ideal of $R$; this is the step that forces assumptions on the characteristic of $K$.
To conclude the proof it is enough show that for every sequence $\mathbf o=c_1,\dots, c_i$ with $0\leq i\leq r$  
the Hilbert function of $I_{\mathbf o}$ is well defined (i.e. constant on a Zariski open set) and independent of the order of the linear forms. We can follow the same outline of the proof of Proposition \ref{constant} and Corollary \ref{order}. All cases are quite similar and for the sake of the exposition we only show (C).

Write $R$ as $S/I$ where $S$ is the Veronese ring  and  $S\cong K[Z_1,\dots, Z_{\binom{b+s-1}{s-1}}]/J$. Let $\mathbb K$ be the fraction field of the polynomial  ring $K[y_{1,1}\dots,y_{1,s}, \dots, y_{r,1}\dots,y_{r,s}].$  Let $\mathbf{{\tilde l}}$ be the sequence of linear forms of the ring $S\otimes_K \mathbb K$ defined as  $\tilde{l_1}^b,\dots,\tilde{l_r}^b$ where, for $j=1,\dots,r$  we have set $\tilde l_j=y_{j,1}X_1+\cdots+y_{j,s}X_s.$  Fix a permutation $\sigma \in  \mathcal S_r$  and a sequence of operations $\mathbf o=c_1,\dots, c_i$ for some $i$, $0\leq i\leq r.$  We can compute algorithmically, by standard methods, a reduced Gr\"obner basis for the pre-image in $\mathbb K [Z_1,\dots, Z_{\binom{b+s-1}{s-1}}]$ of the ideal  $I_{\sigma \mathbf l, \mathbf o} \subset R\otimes_K \mathbb K$ where $\sigma \mathbf l$ is the image of $\sigma \mathbf{{\tilde l}}$ in $R \otimes_K \mathbb K.$ Let  $\alpha_1,\dots,\alpha_q$  be all  the non-zero coefficients of all the monomials in all the polynomials of  $\mathbb K [Z_1,\dots, Z_{\binom{b+s-1}{s-1}}]$  involved in such computation. 
The field $K$ is infinite and $\mathbb A(K^{sr})$ is irreducible, so we let $U_i$ be the non-empty Zariski open set of $\mathbb A(K^{sr})$ where the rational function $\alpha_i$ are defined and do not vanish. By setting $U_{\sigma,\mathbf o}=\cap_i^q U_i$ we get a non-empty Zariski open set of $\mathbb A(K^{sr})$  where the algorithm runs, roughly speaking, in a identical manner returning the same initial ideal no matter what point of $U_{\sigma, \mathbf o}$ is chosen as sequence of $r$ linear forms. To conclude we let $U$, the desired Zariski open set of $r$ general linear forms, to be the intersection of all the $U_{\sigma,\mathbf o}$'s. 
\end{proof}

\end{example}
\begin{remark}
Note that the characteristic assumption in the above (C),(D) and (E) is essential.  For instance, let $R=K[X_1^2,X_1X_2,X_2^2]/(X_1^2,X_2^2)\cong K[Z_1,Z_2,Z_3]/(Z_1,Z_3,Z_2^2-Z_1Z_3)$ and assume $\Char(K)=2.$ This correspond to the case $s=2$ and $b=2$ of example (C). The square of a general linear form of $K[X_1,X_2]$ can be written as $X_1^2+\lambda X_2^2$ and it has a zero image in $R.$ Property (2) of \textbf{(Gr,$d$)} is not satisfied. Moreover, a zero linear form clearly does not satisfy Green's estimate.
\end{remark}

We can now prove the desired hyperplane restriction theorem. Our proof follows the same outline of the classical result of Green \cite{Gr2}.
\begin{theorem}\label{NewGreenTHM} Let $R$ be a standard graded algebra and let $l_1,\dots,l_r$ be linear forms satisfying \textbf{(Gr,$d$)}. Then
\[
\dim_K (R/(l_1))_d \leq (\dim_K(R_d))_{\langle d \rangle }.
\] 
\end{theorem}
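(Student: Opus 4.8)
The plan is to follow Green's inductive scheme in \cite{Gr2}, but to replace the genericity of the linear form by the combinatorial input packaged in \textbf{(Gr,$d$)}. The engine of the argument is a single short exact sequence. For any sequence $\mathbf o = o_1,\dots,o_i$ with $i<r$ and next form $l_{i+1}$, multiplication by $l_{i+1}$ yields
\[
0 \to (R/(I_{\mathbf o}:l_{i+1}))(-1) \xrightarrow{\ \cdot l_{i+1}\ } R/I_{\mathbf o} \to R/(I_{\mathbf o}+(l_{i+1})) \to 0,
\]
that is $0 \to (R/I_{\mathbf o, c})(-1) \to R/I_{\mathbf o} \to R/I_{\mathbf o, s} \to 0$. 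Setting $f(\mathbf o) = \dim_K (R/I_{\mathbf o})_{d-|\mathbf o|_c}$, and noting that $|\mathbf o, s|_c = |\mathbf o|_c$ while $|\mathbf o, c|_c = |\mathbf o|_c + 1$, the sequence gives the Pascal-type recursion
\[
f(\mathbf o) = f(\mathbf o, s) + f(\mathbf o, c), \qquad i<r.
\]
Here $f(\emptyset) = \dim_K R_d$ is the quantity whose Macaulay bound we seek, and $f(s) = \dim_K (R/(l_1))_d$ is exactly the left-hand side of the theorem.

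Next I would read off the boundary behaviour from properties \eqref{11} and \eqref{22}. As long as $\frk m \not\subseteq I_{\mathbf o}$, property \eqref{11} guarantees $l_{i+1} \notin I_{\mathbf o}$, so neither operation degenerates: the sum strictly enlarges and the colon stays proper. Once $\frk m \subseteq I_{\mathbf o}$ the quotient $R/I_{\mathbf o}$ is concentrated in degree $0$, so $f(\mathbf o) \in \{0,1\}$ and the branch terminates; by property \eqref{22} every leaf reached at full length $i=r$ with fewer than $d$ colons also vanishes. Expanding the Pascal recursion down to these terminal nodes, $f(\mathbf o)$ equals the number of \emph{good} leaves below $\mathbf o$, namely the terminal sequences that carry exactly $d$ colons. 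In this language the theorem asserts that the number of good leaves under the initial $s$ is at most $(\dim_K R_d)_{\langle d\rangle}$.

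The heart of the proof is then a double induction, on the degree $d$ and on the number of forms $r$, comparing this leaf count with the extremal lex-segment configuration, for which the recursion reproduces the Macaulay operation $c \mapsto c_{\langle d\rangle}$ exactly. The essential structural input is property \eqref{33}, which after passing to quotients reads
\[
f(\mathbf o, c, s) \ge f(\mathbf o, s, c);
\]
moving a colon ahead of a sum never decreases the corresponding quotient dimension. I would use this as a sorting inequality: repeatedly swapping $sc$ into $cs$ drives the actual tree toward the colons-first arrangement realized by the lex ideal, and since $f(\emptyset) = \dim_K R_d$ is fixed, concentrating good leaves in the colon-heavy branches forces $f(s)$ down to at most $c_{\langle d\rangle}$. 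The colon operation lowers the evaluation degree by one, which is what drives the induction on $d$, and the binomial identity $\binom{k_i}{i} - \binom{k_i-1}{i} = \binom{k_i-1}{i-1}$ governs how the Macaulay representation of $\dim_K R_d$ propagates through each step.

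I expect the main obstacle to be precisely this inductive bookkeeping: verifying that the reordering inequalities from \eqref{33} assemble into the \emph{sharp} bound $c_{\langle d\rangle}$ rather than something weaker, i.e.\ that the lex-segment configuration is genuinely the worst case at every stage of the recursion. This is the delicate step in Green's original argument, and the three conditions of \textbf{(Gr,$d$)} were isolated exactly so that each transition of the induction---nondegeneracy of the operations, termination of the branches, and the one-step reordering comparison---remains valid without any appeal to generic initial ideals.
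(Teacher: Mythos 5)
Your setup coincides exactly with the paper's: the short exact sequence giving the recursion $f(\mathbf o)=f(\mathbf o,s)+f(\mathbf o,c)$, the vanishing at the boundary of the tree via properties \eqref{11} and \eqref{22}, and the reordering inequality $f(\mathbf o,c,s)\ge f(\mathbf o,s,c)$ obtained from property \eqref{33} are precisely the ingredients used there, with the same orientations. But your argument stops where the real work begins, and you say so yourself (``I expect the main obstacle to be precisely this inductive bookkeeping''). Two things are missing. First, the correct inductive statement is not a global comparison of leaf counts with an extremal lex-segment configuration; it is a one-step Green bound at \emph{every node} of the tree: for every $I\in\mathbf{I}_{i,j}$ with $i<r$ and $j<d$ one claims
\[
\dim_K\bigl(R/(I+(l_{i+1}))\bigr)_{d-j}\ \le\ \bigl(\dim_K(R/I)_{d-j}\bigr)_{\langle d-j\rangle},
\]
proved by decreasing induction on the pair $(i,j)$, with base cases $i=r-1$ and $j=d-1$ handled by properties \eqref{22} and \eqref{11} respectively. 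Second, and this is the heart of the matter, the inductive step reduces to a purely numerical lemma: combining the recursion, property \eqref{33}, and the induction hypotheses at $(\mathbf o,s)\in\mathbf{I}_{i+1,j}$ and $(\mathbf o,c)\in\mathbf{I}_{i+1,j+1}$ yields only the self-referential inequality $c_H\le (c_H)_{\langle\bar d\rangle}+(c-c_H)_{\langle\bar d-1\rangle}$, where $c=f(\mathbf o)$, $c_H=f(\mathbf o,s)$, $\bar d=d-j$, and one must show that this forces $c_H\le c_{\langle\bar d\rangle}$. The paper does this by contradiction, writing the Macaulay representation $c_H=\binom{k_{\bar d}}{\bar d}+\dots+\binom{k_\delta}{\delta}$ and treating the cases $\delta=1$ and $\delta>1$ separately; this manipulation is the delicate step of Green's argument, and nothing in your sketch substitutes for it.

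Moreover, the ``sorting'' picture you propose in its place would not assemble into a proof as stated. No lex ideal ever appears in the actual argument: the Macaulay bound emerges from the numerical lemma, not from a comparison with an extremal algebra, so ``the lex-segment configuration realized by colons-first arrangements'' is not an object you can induct against. Also, property \eqref{33} is a single-swap inequality tied to one specific degree $d-|\mathbf o|_c-1$; iterated swaps of $sc$ into $cs$ occur at different nodes and different evaluation degrees, and you have exhibited no monotone quantity that such swaps drive toward $c_{\langle d\rangle}$. Your reformulation of the theorem as ``the number of good leaves under $s$ is at most $(\text{total number of good leaves})_{\langle d\rangle}$'' is correct, but the combinatorial optimization it poses is exactly as hard as the original statement: solving it amounts to proving the node-by-node claim above, which is what the paper actually does.
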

\begin{proof} 
We let ${\bf I}_{i,j}$ to be the set of all ideals $I_{\bf l, \bf o}$ with ${\bf l} =l_1,\dots, l_r$ and such that $\vert {\bf o} \vert = i\leq r $ and $\vert {\bf o} \vert_c=j.$ It is sufficient to prove the following.
\begin{claim*}
For every  $I\in {{\bf I}_{i,j}}$ with $0\leq i<r$ and $0\leq j<d$ we have
\begin{equation}\label{NewGreen} 
\dim_K(R/(I+(l_{i+1})))_{d-j} \leq (\dim_K(R/I)_{d-j})_{\langle d-j \rangle }.
\end{equation}
\end{claim*}
\noindent First of all, we show that the claim holds for all the ideals in ${{\bf I}_{i,j}}$ provided $i=r-1$ or $j=d-1.$ By part \eqref{22} of \textbf{(Gr,$d$)} when $I\in {{\bf I}_{r-1,j}}$  and $j<d$ by assumption, we get ${\bf m}\subseteq I+(l_r)$. Hence $(R/(I+(l_r)))_{d-j}=0$ and the inequality \eqref{NewGreen} holds.  If $I\in {{\bf I}_{i,j}}$ and $j= d-1$ the inequality \eqref{NewGreen} becomes  trivial when  ${\bf m }\subseteq I$ and otherwise it becomes $\dim_K(R/(I+(l_{i+1})))_{1} \leq \dim_K(R/I)_1-1$ which  follows from part \eqref{11} of \textbf{(Gr,$d$)}. 

We do a decreasing induction on the double index of ${{\bf I}_{i,j}}.$
 Let $I\in {{\bf I}_{i,j}}$ with $i<r-1$ and $j<d-1.$ 
By induction we know that  \eqref{NewGreen} holds for $(I+(l_{i+1}))\in  {{\bf I}_{i+1,j}}$ and for $(I:l_{i+1})\in  {{\bf I}_{i+1,j+1}}.$ 

Consider the sequence:
$
0 \rightarrow \frac{R}{(I+(l_{i+1})):l_{i+2}}(-1) \xrightarrow{\cdot l_{i+2}} \frac{R}{I+(l_{i+1})} \rightarrow \frac{R}{I+(l_{a+1})+(l_{a+2})} \rightarrow 0.
$ Let $\bar d=d-j.$ 

By looking at the graded component of degree $\bar d$ we get:
$
\notag \dim_K\left(\frac{R}{I+(l_{i+1})}\right )_{\bar d} = 
\dim_K \left( \frac{R}{(I+(l_{i+1})):l_{i+2}}\right )_{\bar d-1} 
 +\dim_K\left( \frac{R}{I+(l_{i+1})+(l_{i+2})}\right )_{\bar d}.
$

Property \eqref{33} of (Gr,$d$) implies $
\dim_K \left( \frac{R}{(I+(l_{i+1})):l_{i+2}}\right )_{\bar d-1}\leq \dim_K \left( \frac{R}{(I:l_{i+1})+(l_{i+2})}\right )_{\bar d-1},
$
and by using the inductive assumption  on $I:l_{i+1}$ and on $I+(l_{i+1})$
we deduce that \[\dim_K\left(\frac{R}{I+(l_{i+1})}\right )_{\bar d} \leq \left(\dim_K \left( \frac{R}{I:l_{i+1}}\right )_{\bar d-1}\right )_{\langle  \bar d-1 \rangle }+\left(\dim_K \left(\frac{R}{I+(l_{i+1})}\right )_{\bar d}\right)_{\langle \bar d \rangle }.\]

\vspace{0.2cm}
To simplify the notation, set $c=\dim_K(R/I)_{\bar d}$ and $c_H=\dim_K(R/(I+(l_{i+1})))_{\bar d}.$ From the short exact sequence $ 0 \rightarrow  \frac{R}{I:l_{i+1}}(-1) \xrightarrow {l_{i+1}} \frac{R}{I} \rightarrow \frac{R}{I+(l_{i+1})} \rightarrow 0$  we know that  $\dim_K \left( \frac{R}{I:l_{i+1}}\right )_{\bar d-1}=c-c_H,$ therefore the above upper bound for $\dim_K\left(\frac{R}{I+(l_{i+1})}\right )_{\bar d}$ becomes: 
\begin{equation}\label{GreenProof}
c_H\leq (c_H)_{\langle \bar d \rangle }+(c-c_H)_{\langle \bar d -1 \rangle }.
\end{equation}
Write $c_H=\binom{k_{\bar d}}{\bar d}+\binom{k_{\bar d-1}}{\bar d-1}+\dots + \binom{k_{\delta}}{\delta}.$  The inequality of the claim, which is $c_H\leq c_{\langle \bar d \rangle }$, is trivial when $c_H=0$, hence we can assume $k_{\delta}\geq \delta.$  Furthermore notice that  $c_H\leq c_{\langle \bar d \rangle }$ is equivalent to  $c\geq  \binom{k_{\bar d}+1}{\bar d}+\binom{k_{\bar d-1}}{\bar d-1}+\dots + \binom{k_{\delta}+1}{\delta}.$  If the claim fails we have:
\begin{equation}\label {Greenpp}
c-c_H< \binom{k_{\bar d}}{\bar d-1}+\binom{k_{\bar d-1}}{\bar d-2}+\dots + \binom{k_{\delta}}{\delta-1}.
\end{equation}
 We use \eqref{GreenProof} to derive a contradiction. There are two cases to consider.

If $\delta=1$ then \eqref{Greenpp} becomes $c-c_H\leq \binom{k_{\bar d}}{\bar d-1}+\binom{k_{\bar d-1}}{\bar d-2}+\dots + \binom{k_{2}}{1}.$ 

\noindent Thus  
\begin{alignat}{10}
(c-c_H)_{\langle \bar d-1 \rangle }&\leq \binom{k_{\bar d}-1}{\bar d-1}+\binom{k_{\bar d-1}-1}{\bar d-2}+\dots + \binom{k_{2}-1}{1} \quad& \text{and}\notag \\
(c_H)_{\langle \bar d \rangle }&\leq  \binom{k_{\bar d}-1}{\bar d}+\binom{k_{\bar d-1}-1}{\bar d-1}+\dots +  \binom{k_{2}-1}{2}+    \binom{k_{1}-1}{1}.\notag
\end{alignat} 
By adding these two inequalities, \eqref{GreenProof} gives 
\[
c_H\leq  \binom{k_{\bar d}}{\bar d}+\binom{k_{\bar d-1}}{\bar d-1}+\dots +\binom{k_2}{2} +   \binom{k_{1}-1}{1}<c_H,
\] 
which is a contradiction.

When  $\delta>1$ we can apply the function  $(-) _{\langle \bar d-1 \rangle }$   to both sides of \eqref{Greenpp}.  Since $k_{\delta}>\delta-1$ the last term of the right hand side stays positive, and the strict inequality is preserved. We get
\[ (c-c_H)_{\langle \bar d-1 \rangle }< \binom{k_{\bar d}-1}{\bar d-1}+\binom{k_{\bar d-1}-1}{\bar d-2}+\dots + \binom{k_{\delta}-1}{\delta -1}.
\]
By adding the last inequality to  $(c_H)_{\langle \bar d \rangle } \leq  \binom{k_{\bar d}-1}{\bar d}+\binom{k_{\bar d-1}-1}{\bar d-1}+\dots + \binom{k_{\delta}-1}{\delta}$ we obtain the following contradiction
\[ c_H< \binom{k_{\bar d}}{\bar d}+\binom{k_{\bar d-1}}{\bar d-1}+\dots + \binom{k_{\delta}}{\delta}=c_H.
\]
\end{proof}
A direct consequence of Theorem \ref{NewGreenTHM} is the corollary below.
\begin{corollary}\label{corollario Green}
Let $R$ be a standard graded algebra and let $l_1,\dots,l_r$ be linear forms satisfying \textbf{(Gr,$d$)}, and let the Macaulay representation of $\dim_K(R_d)$ be $\binom{k_d}{d}+\binom{k_{d-1}}{d-1}+\dots + \binom{k_{1}}{1}.$ Then for any $p$ such that $1\leq p\leq r$ we have
\[
\dim_K (R/(l_1,\dots, l_p))_d \leq \binom{k_d-p}{d}+\binom{k_{d-1}-p}{d-1}+\dots + \binom{k_{1}-p}{1}.
\] 
\end{corollary}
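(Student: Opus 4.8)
The plan is to prove the statement by induction on $p$, peeling off one linear form at a time and feeding the estimate of Theorem \ref{NewGreenTHM} into itself. The base case $p=1$ is exactly Theorem \ref{NewGreenTHM}, since $(\dim_K R_d)_{\langle d\rangle} = \binom{k_d-1}{d} + \dots + \binom{k_1-1}{1}$ is the right-hand side for $p=1$.

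For the inductive step I first need to know that the estimate can be applied repeatedly, i.e. that passing to a quotient by one of the forms preserves property \textbf{(Gr,$d$)} for the remaining forms. The key lemma I would isolate is: if $l_1, \dots, l_r$ satisfy \textbf{(Gr,$d$)} in $R$, then the images $\bar l_2, \dots, \bar l_r$ satisfy \textbf{(Gr,$d$)} in $R/(l_1)$. The mechanism is a clean correspondence of ideals: since $(l_1) \subseteq I_{\mathbf l, (s,\mathbf o)}$ for every $\mathbf o$, the ideal $I_{\bar{\mathbf l}, \mathbf o}$ of $R/(l_1)$ is exactly the image of $I_{\mathbf l, (s,\mathbf o)}$, whence $(R/(l_1))/I_{\bar{\mathbf l}, \mathbf o} \cong R/I_{\mathbf l, (s,\mathbf o)}$. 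Under this dictionary the containments of $\mm$, the non-membership of the next form, and the Hilbert-function inequalities required by items \eqref{11}, \eqref{22}, \eqref{33} for $\bar{\mathbf l}$ translate verbatim into the same items for $\mathbf l$ evaluated on the prefixed sequences $(s,\mathbf o)$; note that $\vert (s,\mathbf o)\vert_c = \vert \mathbf o \vert_c$, so the relevant degrees match. Iterating this lemma $i-1$ times shows that $\bar l_i, \dots, \bar l_r$ satisfy \textbf{(Gr,$d$)} in $R/(l_1, \dots, l_{i-1})$ for every $i \le r$.

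With the reduction lemma in hand, applying Theorem \ref{NewGreenTHM} to the algebra $R/(l_1,\dots,l_{i-1})$ and the form $\bar l_i$ yields the one-step inequality
\begin{equation*}
\dim_K (R/(l_1,\dots,l_i))_d \leq \bigl(\dim_K (R/(l_1,\dots,l_{i-1}))_d\bigr)_{\langle d \rangle}
\end{equation*}
for all $1 \le i \le p$. (Alternatively, this inequality is immediate from the Claim inside the proof of Theorem \ref{NewGreenTHM} applied with $j=0$ to the ideal $I_{\mathbf l, (s,\dots,s)} = (l_1,\dots,l_{i-1}) \in \mathbf{I}_{i-1,0}$, which bypasses the reduction lemma altogether.) It then remains to combine these $p$ inequalities, and here I rely on two routine properties of Macaulay's operation: first, $c \mapsto c_{\langle d\rangle}$ is non-decreasing; second, writing $B_m = \binom{k_d-m}{d} + \dots + \binom{k_1-m}{1}$ with the convention $\binom{a}{b}=0$ for $a<b$, one has $B_0 = \dim_K R_d$ and $(B_m)_{\langle d\rangle} = B_{m+1}$, since $\binom{k_d-m}{d} + \dots + \binom{k_1-m}{1}$ is the $d$-th Macaulay representation of $B_m$ and the operation merely subtracts one more from each top (terms that have dropped below their index remain zero, so the identity is insensitive to the bookkeeping). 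Feeding the inductive hypothesis $\dim_K (R/(l_1,\dots,l_{p-1}))_d \le B_{p-1}$ through the one-step inequality and monotonicity gives $\dim_K (R/(l_1,\dots,l_p))_d \le (B_{p-1})_{\langle d\rangle} = B_p$, which is the assertion.

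I expect the main obstacle to be the reduction lemma: establishing that property \textbf{(Gr,$d$)} descends to $R/(l_1)$ for the remaining forms. Everything hinges on the identification $I_{\bar{\mathbf l}, \mathbf o} = \pi(I_{\mathbf l, (s,\mathbf o)})$ under the projection $\pi\colon R \to R/(l_1)$ and on verifying that each of the three defining conditions, together with its index and degree bookkeeping, is preserved; the numerical combination afterward is entirely standard. If one instead invokes the internal Claim of Theorem \ref{NewGreenTHM} directly, the reduction lemma is unnecessary and the proof collapses to the numerical iteration, at the cost of reaching past the theorem's statement into its proof.
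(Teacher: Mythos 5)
Your proof is correct and follows essentially the same route as the paper: induction on $p$, applying Theorem \ref{NewGreenTHM} after observing that the images of the remaining forms satisfy \textbf{(Gr,$d$)} in $R/(l_1)$. The paper simply asserts this descent and leaves the Macaulay-representation bookkeeping implicit, whereas you verify both explicitly (via the identification $I_{\bar{\mathbf l},\mathbf o}=\pi(I_{\mathbf l,(s,\mathbf o)})$ and the identity $(B_m)_{\langle d\rangle}=B_{m+1}$), which is a filling-in of detail rather than a different argument.
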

\begin{proof}By Theorem \ref{NewGreenTHM} we have $\dim_K (R/(l_1))_d \leq \binom{k_d-1}{d}+\binom{k_{d-1}-1}{d-1}+\dots + \binom{k_{1}-1}{1}.$ Note that the images of $l_2,\dots,l_r$ satisfy \textbf{(Gr,$d$)} for $R/(l_1)$. Thus we apply Theorem \ref{NewGreenTHM} and obtain the result by induction.
\end{proof}

We combine Corollary \ref{corollario Green}, Corollary \ref{order} and Remark \ref{irreducible} in the following result.

\begin{theorem}\label{MAIN1} Let $R=K[X_1,\dots,X_n]/I$ be a standard graded algebra and let $V\subset \mathbb{A}(R_1)$ be an irreducible variety spanning  $\mathbb{A}(R_1).$ 
Assume that $K$ is an algebraically closed or that the defining ideal of $V$ is homogeneous.
Then for any $p$ linear forms of $R$ that are general points of $V$ we have:
\[
\dim_K (R/(l_1,\dots, l_p))_d \leq \binom{k_d-p}{d}+\binom{k_{d-1}-p}{d-1}+\dots + \binom{k_{1}-p}{1}.
\] 
\end{theorem}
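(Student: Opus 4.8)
The plan is to produce a long sequence of general points of $V$ that satisfies \textbf{(Gr,$d$)}, and then to invoke Corollary \ref{corollario Green}. Fix $d$, write $n=\dim_K R_1$, and set $r=\max\{p,\,n+d-1\}$, so that $r\ge n+d-1$ and $r\ge p$. The first thing I would record is that $V^r$ is irreducible: under either of the two standing hypotheses ($K$ algebraically closed, or the defining ideal of $V$ homogeneous) this is exactly Remark \ref{irreducible}. Irreducibility of $V^r$ is used repeatedly to guarantee that a finite intersection of non-empty Zariski open subsets of $V^r$ is again non-empty.

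With $V^r$ irreducible I would verify the hypotheses of Lemma \ref{easy} on a suitable open subset of $V^r$. Since $V$ spans $\mathbb{A}(R_1)$ we may select $n$ of its points forming a basis of $R_1$, so, as $r\ge n$, the condition that $l_1,\dots,l_r$ span $R_1$ (equivalently generate $\frk m$) cuts out a non-empty open subset of $V^r$: it is non-empty by the basis just chosen, and open because it is the non-vanishing locus of the maximal minors of the coordinate matrix. Order-independence of the Hilbert functions of the finitely many ideals $I_{\bf o}$, for $0\le i\le r$ and $j$ in the relevant range, holds on a non-empty open subset by Corollary \ref{order}. Intersecting these open sets — non-empty because $V^r$ is irreducible — I obtain a non-empty open $U\subseteq V^r$ on which $l_1,\dots,l_r$ generate $\frk m$ and every $I_{\bf o}$ has order-independent Hilbert function. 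Lemma \ref{easy} then shows that each $r$-tuple in $U$ satisfies \textbf{(Gr,$d$)}, so Corollary \ref{corollario Green} gives, for every point of $U$ and our chosen $p\le r$,
\[
\dim_K (R/(l_1,\dots,l_p))_d \le \binom{k_d-p}{d}+\binom{k_{d-1}-p}{d-1}+\dots+\binom{k_1-p}{1}.
\]

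When $p\ge n+d-1$ one may take $r=p$ above, $U$ already lives in $V^p$, and the proof is complete. The remaining case $p<n+d-1$ forces $r>p$, and the only genuinely delicate point is to transfer the conclusion from $V^r$ down to $V^p$, since the bound on $\dim_K(R/(l_1,\dots,l_p))_d$ depends only on the first $p$ forms. I would argue as follows: let $\pi\colon V^r=V^p\times V^{r-p}\to V^p$ be the projection onto the first $p$ coordinates. As a non-empty open subset of the irreducible variety $V^r$, $U$ is dense, so $\pi|_U$ is dominant; by Chevalley's theorem $\pi(U)$ is constructible and dense, hence contains a non-empty open $W\subseteq V^p$. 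Every $(l_1,\dots,l_p)\in W$ extends to some $(l_1,\dots,l_r)\in U$, and since the displayed bound is insensitive to the extension it holds on all of $W$. Thus the inequality holds for $p$ general points of $V$. I expect this last transfer to be the main obstacle, precisely because the \textbf{(Gr,$d$)} machinery is only available for sequences of length at least $n+d-1$, so one must show that a general short prefix extends to such a sequence — which is where density of $U$, and hence irreducibility of $V^r$, is used once more.
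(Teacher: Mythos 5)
Your proposal is correct and follows essentially the same route as the paper, whose (unwritten) proof of Theorem \ref{MAIN1} is exactly the combination of Remark \ref{irreducible}, Corollary \ref{order}, Lemma \ref{easy} and Corollary \ref{corollario Green} that you carry out. The one step you make explicit that the paper leaves implicit is the transfer from $r$-tuples to $p$-tuples; your Chevalley/projection argument works, and it can even be simplified by fixing any $(l_{p+1},\dots,l_r)\in V^{r-p}$ for which the slice $\{(l_1,\dots,l_p)\in V^p : (l_1,\dots,l_r)\in U\}$ is non-empty, since this slice is automatically open in $V^p$.
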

We are now ready to prove our main theorem, which is a description of the open set where Green's estimate holds. It is important to note that we make no assumption on the field $K.$
\begin{theorem}[Hyperplane Restriction]\label{MAIN}
Let $R$ be a standard graded algebra over a field $K$, and let $\mathbb{A}(R_1)$ be the affine space of  linear forms of $R_1.$
There exist finitely many proper linear subspaces $L_1,\dots,L_m$ of   $\mathbb{A}(R_1)$ such that for any form $l\not \in (\cup L
_i)$ we have:
\[
\dim_k (R/lR)_d\leq (\dim_K R_d)_{\langle d \rangle }. \label{Green's bound}
\]
\end{theorem}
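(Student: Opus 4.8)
The plan is to produce the subspaces $L_i$ as the linear spans of the irreducible components of the locus where Green's bound fails, and to exclude a spanning component by invoking Theorem \ref{MAIN1}. Fix the degree $d$ and set $c=(\dim_K R_d)_{\langle d\rangle}$. For a linear form $l$, multiplication $R_{d-1}\xrightarrow{\cdot l}R_d$ is a $K$-linear map whose matrix, in fixed $K$-bases, has entries that are $K$-linear in the coordinates of $l$, and $\dim_K(R/lR)_d=\dim_K R_d-\operatorname{rank}(\cdot l)$. Hence the \emph{bad locus}
\[
B=\{\,l\in\mathbb{A}(R_1):\dim_K(R/lR)_d> c\,\}
\]
is the determinantal locus where $\operatorname{rank}(\cdot l)\le\dim_K R_d-c-1$; it is Zariski closed and defined over $K$. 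Moreover $lR=(\lambda l)R$ for every $\lambda\in K^{\times}$, so $B$ is stable under scaling, i.e.\ a cone with homogeneous defining ideal.

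First I would reduce the statement to a claim about the irreducible components of $B$. As a $K$-variety, $B$ has finitely many $K$-irreducible components $V_1,\dots,V_m$, each defined over $K$; each is preserved by the connected group acting through scaling (a connected group cannot permute a finite set of components nontrivially), so each $V_i$ is again a cone with homogeneous defining ideal. If I can show that no $V_i$ spans $\mathbb{A}(R_1)$, then taking $L_i$ to be the $K$-linear span of $V_i$ yields a \emph{proper} linear subspace with $V_i\subseteq L_i$. Consequently $B\subseteq\bigcup_i L_i$, and any $l\notin\bigcup_i L_i$ lies outside $B$ and therefore satisfies $\dim_K(R/lR)_d\le c$, which is the desired bound.

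The heart of the argument is to rule out a spanning component, and here I would argue by contradiction. Suppose some $V_i$ spans $\mathbb{A}(R_1)$. Then $V_i$ is an irreducible variety spanning $\mathbb{A}(R_1)$ whose defining ideal is homogeneous, so Theorem \ref{MAIN1} applies to $V=V_i$ through its homogeneous-ideal alternative, which requires no hypothesis on $K$. With $p=1$ this gives $\dim_K(R/(l))_d\le c$ for all $l$ in a non-empty Zariski open subset $U\subseteq V_i$. On the other hand, $\dim_K(R/lR)_d=\dim_K R_d-\operatorname{rank}(\cdot l)$ is upper semicontinuous in $l$ since $\operatorname{rank}(\cdot l)$ is lower semicontinuous; thus on the irreducible variety $V_i$ it attains a minimal value $c_{\min}$ on a dense open subset, and this dense subset meets $U$, forcing $c_{\min}\le c$. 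But $V_i\subseteq B$ means $\dim_K(R/lR)_d> c$ at every point of $V_i$, so $c_{\min}> c$, a contradiction.

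The main obstacle is precisely that $K$ is arbitrary, in particular possibly finite, so the literal reading of ``general linear forms satisfy the bound'' in terms of $K$-rational points is unavailable, as a non-empty open subset of $V_i$ may contain no $K$-points. I circumvent this by phrasing the contradiction entirely through generic values, equivalently the value at the generic point of $V_i$: the minimal value $c_{\min}$ is computed on a dense open set and is simultaneously forced to satisfy $c_{\min}\le c$ by Theorem \ref{MAIN1} and $c_{\min}> c$ by the containment $V_i\subseteq B$. The homogeneity of $B$, which is inherited by each irreducible component, is exactly what lets me land in the homogeneous-ideal case of Theorem \ref{MAIN1} and thereby remove every restriction on the field $K$.
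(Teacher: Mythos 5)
Your argument in the case $K$ infinite is essentially the paper's own proof: the bad locus is Zariski closed, its defining ideal is homogeneous, a spanning irreducible component would have a homogeneous prime ideal, and Theorem \ref{MAIN1} (homogeneous-ideal branch, $p=1$) applied to that component yields points that satisfy Green's bound while lying inside the bad locus, a contradiction. Over an infinite field every step you take is sound: stability under the infinitely many scalings $\lambda\in K^{\times}$ does force the ideal of the point set $B$ (and of each component) to be homogeneous, and you do not even need the semicontinuity detour, since two nonempty open subsets of an irreducible variety meet in a $K$-point, so any point of $U$ is simultaneously good (by Theorem \ref{MAIN1}) and bad (because it lies in $B$).

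The genuine gap is exactly in the place you single out as your contribution: the case of finite $K$. First, both of your mechanisms for homogeneity fail there. Stability under scaling by the finite group $K^{\times}$ does not force homogeneity of the ideal of a set of $K$-points (for $K=\mathbb{F}_2$ scaling stability is vacuous, and the irreducible components of $B$ are then single points, whose defining ideals are maximal and not homogeneous); and ``a connected group cannot permute the components'' is a statement about a $\mathbb{G}_m$-action on a scheme, which does not apply to the $K^{\times}$-action on the $K$-point set $B$ you defined. Second, and more seriously, Theorem \ref{MAIN1} is a statement about $K$-rational linear forms of $R$ lying in a nonempty Zariski open subset of $V$; over a finite field that open subset may contain no $K$-points at all, so Theorem \ref{MAIN1} gives no control whatsoever on ``the value at the generic point of $V_i$.'' Your claimed equivalence between the minimal value on a dense open set and the generic value is precisely the passage that needs rational points; to get a bound at the generic point one would have to rerun the proofs of Proposition \ref{constant} and Theorem \ref{NewGreenTHM} over the function field of $V_i$, which is not a consequence of the statement of Theorem \ref{MAIN1} that you cite as a black box. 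So the inequality $c_{\min}\leq c$ is not established when $K$ is finite. The paper disposes of this case with one observation you missed: when $K$ is finite, $\mathbb{A}(R_1)$ is itself a finite union of proper linear subspaces (when $\dim_K R_1\geq 2$, every point lies on one of the finitely many hyperplanes through the origin), so the theorem is trivially true there, and the whole geometric argument is needed, and valid, only for infinite $K$.
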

\begin{proof} When $K$ is finite the result is trivial because $\mathbb{A}(R_1)$ is itself a finite union of proper linear subspaces. We assume that $K$ is infinite.
Note that the set of all linear forms not satisfying the above inequality is a Zariski closed set, say $V(I)$, of  $\mathbb {A}(R_1)$ which is defined by the vanishing of a certain ideal of minors $I\subseteq {\rm Sym} (R_1)$. Notice that $I$ is a homogeneous ideal. Assume by contradiction that $V(I)$ is not contained in any finite union of proper linear subspaces, hence there exists a irreducible component, say $V(P)$, spanning $\mathbb{A}(R_1)$. 
The prime ideal $P$ is homogeneous and by Theorem \ref{MAIN1} a general point in $V(P)$ satisfies Green's estimate, which is absurd.
\end{proof}

\section{Variations of the Eakin-Sathaye Theorem}
 
By using the rings described in Example \ref{examples of (Gr,$d$)} we can now prove a few versions of the  Eakin-Sathaye theorem. 

\begin{theorem}
 Let $(A,m)$ be a local ring with infinite residue field $K$. Let $I$ be an ideal of $A$. Let $i$ and $p$ be
positive integers. If the number of minimal generators of
$I^i$, denoted by $v(I^i)$, satisfies $v(I^i)<\binom{i+p}{i}$ then 
\begin{itemize}
\item[(a)] (Eakin-Sathaye) There are elements $h_1,\dots,h_p$ in $I$ such that
$I^i=(h_1,\dots,h_p)I^{i-1}.$
\end{itemize}
Moreover: 
\begin{itemize}
\item [(b)](O'Carroll) If $I=I_1 \cdots I_s$, where $I_j$'s are ideals of $A$, the elements  $h_j$'s can be chosen of the form $l_1\cdots l_s$ with $l_i\in I_i.$
\item[(c)] Assume $\Char(K)=0$. If $I=J^b$ , where $J$ is an ideal of $A$, the elements $h_j$'s can be chosen of the form $l^b$ with $l\in J.$
\item[(d)] Assume $\Char(K)=0$. If $I=I_1^{b_1} \cdots I_s^{b_s}$, where $I_j$'s are ideals of $A$, the elements  $h_j$ can be chosen of the form $l_1^{b_1}\cdots l_s^{b_s}$ with $l_i\in I_i.$
\item[(e)] Assume $\Char(K)=0$. If $I=I_1(I_1+I_2)\cdots (I_1+\dots +I_s)$, where $I_j$'s are ideals of $A$, the elements  $h_j$ can be chosen of the form $l_1(l_1+l_2)\cdots (l_1+\dots +l_s)$ with $l_i\in I_i.$ 
\end{itemize}
\end{theorem}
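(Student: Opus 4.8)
The plan is to translate the whole statement into a question about the fiber cone and then feed it to Corollary \ref{corollario Green}. Set $R=\bigoplus_{k\geq 0} I^k/\mm I^k$, the fiber cone of $I$; this is a standard graded $K$-algebra, and since $R_k=I^k/\mm I^k$, Nakayama's lemma gives $\dim_K R_k=v(I^k)$ for every $k$. The first thing I would record is the dictionary between reductions and linear forms: if $h_1,\dots,h_p\in I$ have images $\bar h_1,\dots,\bar h_p\in R_1$, then
\[
I^i=(h_1,\dots,h_p)I^{i-1} \iff \bigl(R/(\bar h_1,\dots,\bar h_p)\bigr)_i=0 .
\]
Indeed the degree-$i$ component of $(\bar h_1,\dots,\bar h_p)$ is $\bigl((h_1,\dots,h_p)I^{i-1}+\mm I^i\bigr)/\mm I^i$, so the quotient vanishes in degree $i$ exactly when $I^i=(h_1,\dots,h_p)I^{i-1}+\mm I^i$, which by Nakayama is equivalent to $I^i=(h_1,\dots,h_p)I^{i-1}$. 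Thus it suffices to produce, in each case, $p$ linear forms of $R$ of the prescribed shape, satisfying \textbf{(Gr,$i$)}, whose quotient kills $R_i$.

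Next I would carry out the numerical reduction. Fix $d=i$ and write the $i$-th Macaulay representation $\dim_K R_i=v(I^i)=\binom{k_i}{i}+\binom{k_{i-1}}{i-1}+\dots+\binom{k_1}{1}$. Put $n=\dim_K R_1$, choose $r=\max\{p,\,n+i-1\}$, and take forms $l_1,\dots,l_r$ satisfying \textbf{(Gr,$i$)}; Corollary \ref{corollario Green} then yields
\[
\dim_K\bigl(R/(l_1,\dots,l_p)\bigr)_i \leq \binom{k_i-p}{i}+\binom{k_{i-1}-p}{i-1}+\dots+\binom{k_1-p}{1}.
\]
The key point is that the hypothesis $v(I^i)<\binom{i+p}{i}$ forces the right-hand side to vanish: from $\binom{k_i}{i}\leq v(I^i)<\binom{i+p}{i}$ one gets $k_i\leq i+p-1$, and the strict decrease $k_i>k_{i-1}>\dots>k_1$ propagates this to $k_j\leq j+p-1$ for every $j$, so that each $\binom{k_j-p}{j}=0$. (Equivalently, $\sum_{j=1}^i\binom{j+p-1}{j}=\binom{i+p}{i}-1$ by the hockey-stick identity, so $\binom{i+p}{i}-1$ is the largest Hilbert value compatible with all $k_j\leq j+p-1$.) Combined with the dictionary this proves (a), once $l_1,\dots,l_p$ are lifted to elements $h_1,\dots,h_p\in I$, which is always possible because $R_1=I/\mm I$.

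For parts (b)--(e) I would only replace Example \ref{examples of (Gr,$d$)}(A) by the matching case of that example. In each situation $R$ is a homomorphic image of the relevant combinatorial ring: when $I=I_1\cdots I_s$ the products of generators of the $I_j$ generate $R_1$ and obey the Segre relations (commutativity of $A$ makes the products depend only on the multiset of factors), so $R$ is a quotient of the Segre ring of case (B); likewise $I=J^b$, $I=I_1^{b_1}\cdots I_s^{b_s}$ and $I=I_1(I_1+I_2)\cdots(I_1+\dots+I_s)$ present $R$ as a quotient of the Veronese, Segre--Veronese and toric rings of cases (C), (D), (E). Example \ref{examples of (Gr,$d$)} then supplies $r\geq n+i-1$ forms of the prescribed shape satisfying \textbf{(Gr,$i$)}, namely images of $l_1\cdots l_s$, $l^b$, $l_1^{b_1}\cdots l_s^{b_s}$, or $l_1(l_1+l_2)\cdots(l_1+\dots+l_s)$. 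Taking $r\geq p$ as above and lifting each general linear form $l_k$ of the auxiliary polynomial ring to an element of the corresponding ideal $I_k$ (resp. $J$), the first $p$ of these forms are the images of elements $h_j$ of exactly the required form, and the estimate above gives $\bigl(R/(\bar h_1,\dots,\bar h_p)\bigr)_i=0$.

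The step I expect to be the main obstacle is the bookkeeping in (b)--(e): one must verify that the monomial generators of each auxiliary ring satisfy the defining relations inside $R$ (so that the surjection onto $R$ really exists and the chosen forms genuinely lift to products/powers of elements of the $I_k$), and that the characteristic-zero hypotheses in (c)--(e) are precisely what guarantees these forms are nonzero and span $\mm_R$ — the Remark following Example \ref{examples of (Gr,$d$)} exhibits how this fails in positive characteristic. By comparison, the Nakayama dictionary and the vanishing of the Macaulay bound are routine.
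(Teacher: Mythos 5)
Your proposal is correct in substance and follows essentially the same route as the paper: pass to the fiber cone $R=\bigoplus_{k\geq 0} I^k/\mm I^k$, translate $I^i=(h_1,\dots,h_p)I^{i-1}$ into the vanishing of $(R/(\bar h_1,\dots,\bar h_p))_i$ via Nakayama, invoke Example \ref{examples of (Gr,$d$)} parts (A)--(E) to obtain forms of the prescribed shape satisfying \textbf{(Gr,$i$)}, and kill $R_i$ by combining Corollary \ref{corollario Green} with the hypothesis $v(I^i)<\binom{i+p}{i}$. Your numerical step (deducing $k_j\leq j+p-1$ for all $j$ from $k_i\leq i+p-1$ and the strict decrease of Macaulay coefficients, so that every $\binom{k_j-p}{j}$ vanishes) is a slightly more explicit rendering of the paper's comparison with the representation $\binom{i+p}{i}-1=\binom{i+p-1}{i}+\binom{i+p-2}{i-1}+\dots+\binom{p}{1}$; it is the same computation.

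The one step you omit, and which the paper performs first, is the reduction to finitely generated ideals. The ring $A$ is only assumed local, not Noetherian, so $I$ (and the $I_j$ in (b),(d),(e), and $J$ in (c)) need not be finitely generated. Without that reduction, $n=\dim_K R_1=\dim_K I/\mm I$ may be infinite; then your choice $r=\max\{p,\,n+i-1\}$ makes no sense, the machinery of Section 1 does not apply (a standard graded algebra there is by definition a quotient of a polynomial ring in finitely many variables), and Nakayama's lemma --- which you use both for $\dim_K R_k=v(I^k)$ and for the dictionary between reductions and vanishing in degree $i$ --- requires finite generation of the modules involved. The fix is exactly the paper's opening move: since $v(I^i)$ is finite, finitely many products of $i$ elements of $I$ generate $I^i$; letting $H\subseteq I$ be the ideal generated by all factors appearing in those products, $H$ is finitely generated with $H^i=I^i$, and the statement for $H$ implies the statement for $I$ (and similarly one replaces the $I_j$ and $J$ by finitely generated subideals). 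With this preamble added, your argument is complete.
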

\begin{proof}
 First of all, note that  since
$v(I^i)$ is finite, without loss of generality we can assume that $I$
is also finitely generated: in fact, if $H\subseteq I$ is a finitely
generated ideal such that $H^i=I^i$ the result for $H$ implies
the one for $I.$ Similarly, we can also assume that the ideals $I_j$ of (b),(d) and (e) and the ideal $J$ of (c) are finitely generated.
By the use of Nakayama's Lemma, we can replace $I$ by the
homogeneous maximal ideal of the fiber cone $R=\bigoplus_{i\geq 0} I^i/mI^i.$
 Note that $R$ is a standard graded algebra finitely generated over the infinite field $R/m=K.$ Moreover, the algebras $R$ of (a),(b),(c),(d),(e) satisfy the properties of the Example \ref{examples of (Gr,$d$)} parts (A),(B),(C),(D), and (E) respectively. Let $l_1,\dots, l_r$ as in Example \ref{examples of (Gr,$d$)} and assume also that $p\leq r.$ The theorem is proved if we can show that $(R/(l_1,\dots,l_p))_i=0.$ Note that $\dim_K R_i  \leq \binom{i+p}{i}-1= \binom{i+p-1}{i}+\binom{i+p-2}{i-1}+\dots+\binom{i+p-j}{i-j+1}+\dots
+\binom{p}{1}.$ The last equality can be proved directly or alternatively one can
order the arrays of Macaulay coefficients by using the
lexicographic order and observe that
$(i+p,0,\dots,0)$ is preceded by
$(i+p-1,i+p-2,\dots,p).$ By Corollary \ref{corollario Green} we deduce
\[
\dim_K (R/(l_1,\dots,l_p))_i\leq
\binom{i-1}{i}+\binom{i-2}{i-1}+\dots
+\binom{0}{1}.
\]
The term on the right hand side is zero and therefore the theorem is proved.
\end{proof}

\small

\end{document}